\numberwithin{equation}{section}
\numberwithin{figure}{section}
\theoremstyle{plain}
\newtheorem{thm}{\protect\theoremname}
\theoremstyle{definition}
\newtheorem{problem}[thm]{\protect\problemname}
\theoremstyle{plain}
\newtheorem{lem}[thm]{\protect\lemmaname}
\theoremstyle{plain}
\newtheorem{prop}[thm]{\protect\propositionname}
\theoremstyle{plain}
\newtheorem{cor}[thm]{\protect\corollaryname}
\providecommand{\corollaryname}{Corollary}
\providecommand{\lemmaname}{Lemma}
\providecommand{\problemname}{Problem}
\providecommand{\propositionname}{Proposition}
\providecommand{\theoremname}{Theorem}
\begin{document}
\title{RICHNESS OF ARITHMATIC PROGRESSIONS IN COMMUTATIVE SEMIGROUP }
\author{ANINDA CHAKRABORTY AND SAYAN GOSWAMI}
\email{{\Large{}anindachakraborty2@gmail.com}}
\address{{\Large{}Government General Degree College at Chapra/ University of
Kalyani}}
\email{{\Large{}sayan92m@gmail.com}}
\address{{\Large{}Department of Mathematics, University of Kalyani}}
\thanks{{\Large{}The second author of the paper is supported by UGC-JRF fellowship.}}
\begin{abstract}
{\Large{}Furstenberg and Glasner proved that for an arbitrary $k\in\mathbb{N}$,
any piecewise syndetic set contains $k-$term arithmetic progressions
and such collection is also piecewise syndetic in $\mathbb{Z}.$ They
used algebraic structure of $\beta\mathbb{N}$. The above result was
extended for arbitrary semigroups by Bergelson and Hindman, again
using the structure of Stone-\v{C}ech compactification of general
semigroup. Beiglboeck provided an elementary proof of the above result
and asked whether the combinatorial argument in his proof can be enhanced
in a way which makes it applicable to a more abstract setting. In
a recent work the second author of this paper and S.Jana provided
an affirmative answer to Beiglboeck's question for countable commutative
semigroup. In this work we will extend the result of Beiglboeck in
different type of settings.}{\Large\par}
\end{abstract}

\maketitle

\section{{\Large{}Introduction}}

{\Large{}A subset $S$ of $\mathbb{\ensuremath{Z}}$ is called syndetic
if there exists $r\in\mathbb{N}$ such that $\bigcup_{i=1}^{r}(S-i)=\mathbb{Z}$
and it is called thick if it contains arbitrary long intervals in
it. Sets which can be expressed as intersection of thick and syndetic
sets are called piecewise syndetic sets. All these notions have natural
generalization for arbitrary semigroups. }{\Large\par}

{\Large{}One of the famous Ramsey theoretic result is so called Van
der Waerden\textquoteright s Theorem \cite{key-10} which states that
atleast one cell of any partition $\{C_{1},C_{2},\ldots,C_{r}\}$
of $\mathbb{N}$ contains arithmetic progressions of arbitrary length.
Since arithmetic progressions are invariant under shifts, it follows
that every piecewise syndetic set contains arbitrarily long arithmetic
progressions. The following theorem is due to Van der Waerden \cite{key-10}}{\Large\par}
\begin{thm}
{\Large{}Given any $r,l\in\mathbb{N}$, there exists $N(r,l)\in\mathbb{N}$,
such that for any $r-$partition of $[1,N]$, atleast one of the partition
contains an $l-$length arithmetic progression.}{\Large\par}
\end{thm}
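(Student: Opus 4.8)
The plan is to prove the statement by a double induction: an outer induction on the progression length $l$, and, inside the passage from $l$ to $l+1$, a secondary induction on an auxiliary parameter counting how many monochromatic $l$-term progressions can be made to ``point at'' a common focus. For $l=1$ the statement is vacuous (take $N=1$) and for $l=2$ it is the pigeonhole principle (take $N=r+1$), so I would fix $l\ge 2$, assume that $N(r,l)$ exists for every number of colours $r$, and construct $N(r,l+1)$.

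Call a family $A_{1},\dots,A_{m}$ of $l$-term arithmetic progressions inside an interval \emph{colour-focused} for an $r$-colouring $\chi$ if every $A_{i}$ is $\chi$-monochromatic, the $m$ colours so used are pairwise distinct, and there is one point $f$, the \emph{focus}, with $f=a_{i}+ld_{i}$ whenever $A_{i}=\{a_{i},a_{i}+d_{i},\dots,a_{i}+(l-1)d_{i}\}$. The core of the argument is the auxiliary claim: for every $r$ and every $m$ there is a number $M(r,m)$ so that every $r$-colouring of $[1,M(r,m)]$ either contains a monochromatic $(l+1)$-term progression or admits $m$ colour-focused $l$-term progressions whose common focus also lies in $[1,M(r,m)]$. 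Granting this, the outer step is finished by taking $m=r$: the focus $f$ receives some colour, and since the $r$ colour-focused progressions use $r$ distinct colours, $\chi(f)$ is the colour of some $A_{i}$, whence $A_{i}\cup\{f\}$ is a monochromatic $(l+1)$-term progression; thus $N(r,l+1)=M(r,r)$ works.

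I would prove the auxiliary claim by induction on $m$. The base case $m=1$ follows from the outer hypothesis: inside $[1,2N(r,l)]$ there is, by the statement for length $l$, a monochromatic $l$-term progression contained in $[1,N(r,l)]$; since $l\ge 2$ its common difference is at most $N(r,l)$, so its focus lies in $[1,2N(r,l)]$, and $M(r,1)=2N(r,l)$ works. For the step, suppose $M:=M(r,m)$ is known, put $M':=N(r^{M},l)$ (again available from the outer hypothesis, now with $r^{M}$ colours), and I claim $M(r,m+1)=2MM'$ works. Given an $r$-colouring $\chi$ of $[1,2MM']$, cut $[1,MM']$ into $M'$ consecutive length-$M$ blocks $B_{1},\dots,B_{M'}$ and assign to each index $k\in\{1,\dots,M'\}$ the pattern $(\chi((k-1)M+1),\dots,\chi(kM))\in[r]^{M}$ of the block $B_{k}$, obtaining an $r^{M}$-colouring of $\{1,\dots,M'\}$. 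By the statement for length $l$ with $r^{M}$ colours there is an $l$-term progression of indices $i_{0},i_{0}+e,\dots,i_{0}+(l-1)e$ on which that pattern is constant, i.e.\ $\chi$ agrees on $B_{i_{0}},\dots,B_{i_{0}+(l-1)e}$; since $e<M'$ the ``focus block'' $B_{i_{0}+le}$ still lies inside $[1,2MM']$. Now apply the inner hypothesis to $\chi$ on $B_{i_{0}}$ (length $M$): either we already have a monochromatic $(l+1)$-term progression, or we obtain colour-focused $l$-term progressions $A_{1},\dots,A_{m}$ with focus $f\in B_{i_{0}}$. Because the blocks $B_{i_{0}},\dots,B_{i_{0}+(l-1)e}$ are coloured identically, for each $i$ the progression whose $t$-th term is $a_{i}+t(d_{i}+eM)$ (for $0\le t\le l-1$) has its $t$-th term in the same relative position inside $B_{i_{0}+te}$ as $a_{i}+td_{i}$ has inside $B_{i_{0}}$, hence is monochromatic of the colour of $A_{i}$, with focus $a_{i}+l(d_{i}+eM)=f+leM$; and the progression $f,f+eM,\dots,f+(l-1)eM$ is monochromatic of colour $\chi(f)$ with the same focus $f+leM$. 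If $\chi(f)$ is one of the colours used by $A_{1},\dots,A_{m}$, then we already had a monochromatic $(l+1)$-term progression inside $B_{i_{0}}$; otherwise these $m+1$ progressions are colour-focused with common focus $f+leM\in[1,2MM']$, completing the induction on $m$.

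I expect the only genuine difficulty to be bookkeeping rather than ideas: one must keep the focus inside the working interval at every stage (this is the role of the factors of $2$ in $M(r,1)=2N(r,l)$ and $M(r,m+1)=2MM'$) and check that the ``diagonal'' progressions $t\mapsto a_{i}+t(d_{i}+eM)$ and $t\mapsto f+teM$ really do visit the successive identically-coloured blocks term by term, so that monochromaticity transfers. This forces a tower-type (indeed Ackermann-type) bound on $N(r,l)$, but only existence is asserted here. An alternative is to deduce the statement from the Hales--Jewett theorem via the standard reduction from combinatorial lines to arithmetic progressions, but that merely repackages the same inductive core, so I would keep the self-contained double induction above.
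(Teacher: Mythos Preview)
Your proof is correct: this is the classical colour-focusing argument for van der Waerden's theorem, and the bookkeeping you flag (keeping the focus inside the working interval, checking that the diagonal progressions land in the successive identically-coloured blocks) is handled properly by your choices $M(r,1)=2N(r,l)$ and $M(r,m+1)=2MM'$.

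However, there is nothing to compare against: the paper does not prove this statement. It is quoted as a classical result due to van der Waerden, with a citation to the original 1927 paper, and is invoked later only as a black box (the proof of Theorem~\ref{Theorem 7} simply writes ``let $M(r,l)=N$ be the Van der Waerden number''). So your proposal supplies a proof where the paper supplies none; the double-induction/colour-focusing route you chose is the standard modern presentation and would be an appropriate self-contained addition if one were wanted.
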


{\Large{}Furstenberg and E. Glasner in \cite{key-5} algebraically
and Beiglboeck in \cite{key-1} combinatorially proved that if $S$
is a piecewise syndetic subset of $\mathbb{Z}$ and $l\in\mathbb{N}$
then the set of all $l$ length progressions contained in $S$ is
also large. The statement is the following:}{\Large\par}
\begin{thm}
{\Large{}\label{Thm 2}Let $k\in\mathbb{N}$ and assume that $S\subseteq\mathbb{Z}$
is piecewise syndetic. Then $\{(a,d)\,:\,\{a,a+d,\ldots,a+kd\}\subset S\}$
is piecewise syndetic in $\mathbb{Z}^{2}$.}{\Large\par}
\end{thm}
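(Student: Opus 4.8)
The plan is to run Beiglboeck's combinatorial strategy: reduce the statement to a finitary one and then feed it to van der Waerden's Theorem (Theorem 1).

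First I would pass to the standard equivalent description of piecewise syndeticity: $A\subseteq\mathbb{Z}^{m}$ is piecewise syndetic iff there is a finite $H$ with $\bigcup_{v\in H}(A-v)$ thick, and a subset of $\mathbb{Z}^{m}$ is thick iff it contains a translate of every finite set --- in $\mathbb{Z}^{2}$ it suffices to accommodate translates of the boxes $\{0,\dots,n\}^{2}$. Writing $T=\{(a,d):\{a,a+d,\dots,a+kd\}\subseteq S\}$, it is then enough to produce one finite $H\subseteq\mathbb{Z}^{2}$ --- I would take $H=\{0,\dots,C\}^{2}$ for a constant $C$ depending only on $k$ and $S$ --- so that for every $n$ there are $a_{0},d_{0}$ with the property that every $(a,d)\in[a_{0},a_{0}+n]\times[d_{0},d_{0}+n]$ admits some $(r,s)\in\{0,\dots,C\}^{2}$ with $\{a+r,\ a+r+(d+s),\ \dots,\ a+r+k(d+s)\}\subseteq S$.

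To reach this I would exploit piecewise syndeticity of $S$: fix $\lambda$ with $\bigcup_{t=1}^{\lambda}(S-t)$ thick, so $S$ is $\lambda$-syndetic on arbitrarily long intervals. Choose such an interval $I$ of length $N\gg n,\lambda,k$ and colour $j\in I$ by the least $t\le\lambda$ with $j+t\in S$; then a monochromatic $l$-term progression of colour $t$, shifted by $t$, becomes an $l$-term progression inside $S$. The heart of the matter is to upgrade this from producing a single progression to producing a genuinely two-dimensional arithmetic pattern inside $S$, by a nested application of Theorem 1. Cut $I$ into consecutive blocks of length $N(\lambda,k+2)$; Theorem 1 gives in each block a monochromatic $(k+2)$-term progression, hence a $k$-term progression inside $S$ whose common difference and whose starting offset within the block are both bounded by $N(\lambda,k+2)$. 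Record for each block the triple (colour, common difference, offset), a datum from a fixed finite alphabet, and apply Theorem 1 once more to this sequence of block-data to obtain a long arithmetic progression of blocks all carrying the same triple. Across these blocks the chosen $k$-progressions are equally spaced and identically placed, so $S$ contains a full two-parameter grid; re-reading this grid as (start, common difference) pairs yields a two-dimensional family inside $T$, and one checks that after the bounded translation by $H$ this family fills an arbitrarily large box of pairs.

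I expect the real obstacle to be precisely that last verification --- securing two-dimensional richness. A single long progression $\{b,b+e,\dots,b+(l-1)e\}\subseteq S$ only produces the pairs $(b+ie,je)$, a dilated sublattice of $\mathbb{Z}^{2}$ and never a box, so one genuinely needs many progressions, arranged so that a bounded shift can slide any prescribed $(a,d)$ onto the family; this is exactly what forces every difference parameter to remain controlled by van der Waerden numbers, so that the residue obstructions $H$ must defeat are of bounded modulus. It is also this packaging that has to be adapted when the paper moves on to general commutative semigroups, where intervals, blocks and ``common difference'' must be replaced by structures intrinsic to the semigroup and its translation action.
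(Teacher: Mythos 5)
Your reduction to a finitary statement is fine, and the first van der Waerden step (colouring each $j$ in a long interval $I\subseteq\bigcup_{t=1}^{\lambda}(S-t)$ by the least $t\le\lambda$ with $j+t\in S$) is a correct use of Theorem 1. But the step you yourself flag as ``the real obstacle'' is a genuine gap, and the construction you sketch cannot close it. The grid your nested application produces consists of the points $\alpha+u(eW)+v\delta$ with $0\le u<L$, $0\le v\le k$, where $W=N(\lambda,k+2)$ is the block length, $\delta<W$ is the common in-block difference, and $e$ is the common difference of the monochromatic progression of blocks. A pair $(a,d)$ whose full $(k+1)$-term progression lies inside this grid must have $d=p(eW)+q\delta$ with $q\in\{0,1\}$ (the constraint $v+kq\le k$ forces this), so the achievable common differences occupy at most two residue classes modulo $eW$, and the admissible starts for such a $d$ are likewise spaced $eW$ apart. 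Since $e$ is controlled only by a van der Waerden number for a progression of $L\approx n$ blocks, $eW$ grows with $n$, so no \emph{fixed} finite $H=\{0,\dots,C\}^{2}$ can translate every point of an $n\times n$ box onto this family once $n\gg C$. This is not a technicality to be ``checked'': finitary Ramsey arguments of this shape deliver \emph{dilated} copies of grids with unbounded dilation factor, whereas piecewise syndeticity of $\{(a,d):\{a,a+d,\dots,a+kd\}\subseteq S\}$ demands translated boxes up to a perturbation of bounded size.

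The missing idea --- the engine of Beiglboeck's proof and of this paper's proof of the analogous Theorem \ref{Theorem 7} --- is to stop constructing explicit grids and instead invoke the partition regularity of piecewise syndeticity. One shows that $B=\{(a,d):a+[1,N]d\subseteq\bigcup_{t\in E}(-t+S)\}$ is thick in $\mathbb{Z}^{2}$ for a suitable van der Waerden number $N$, colours each $(a,d)\in B$ by which $l$-term subprogression of $[1,N]$ and which shift $t\in E$ land its progression inside $S$, concludes that some colour class $Q$ is piecewise syndetic because every finite partition of a thick set has a piecewise syndetic cell, and finally transports $Q$ into the target set via affine maps $(s,t)\mapsto(s+at+c,t)$ and $(s,t)\mapsto(s,dt)$, which preserve piecewise syndeticity (Lemmas \ref{Lemma 5}, \ref{Lemma 6}, \ref{Lemma 8}). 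These two nonconstructive inputs are exactly what your sketch lacks, and they are what make the bounded-translation problem you ran into disappear.
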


{\Large{}In the recent work \cite[Theorem 6]{key-9}, authors have
extended the technique of Beigelboeck in general commutative semigroup
and proved the following:}{\Large\par}
\begin{thm}
{\Large{}\label{Thm 3} Let $(S,+)$ be a commutative semigroup and
$F$ be any finite subset of $S$. Then for any piecewise syndetic
set $M\subseteq S$, the collection $\{(a,n)\in S\times\mathbb{N}:\,a+nF\subset M\}$
is piecewise syndetic in $(S\times\mathbb{N},+)$. }{\Large\par}
\end{thm}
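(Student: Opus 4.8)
The plan is to produce a finite set $\mathcal H\subseteq S\times\mathbb N$ for which $\bigcup_{\eta\in\mathcal H}(A-\eta)$ is thick in $S\times\mathbb N$, where $A:=\{(a,n)\in S\times\mathbb N:\ a+nF\subseteq M\}$; this makes $A$ piecewise syndetic, since for any $B$ in a commutative semigroup $B$ is piecewise syndetic exactly when $\bigcup_{h\in H}(B-h)$ is thick for some finite $H$. First I would rephrase the hypothesis the same way: writing $M=T'\cap R$ with $T'$ thick and $R$ syndetic and fixing a finite $H\subseteq S$ with $\bigcup_{h\in H}(R-h)=S$, one checks that $T:=\bigcup_{h\in H}(M-h)$ is thick (given a finite $E\subseteq S$, place a translate of $E+H$ inside $T'$ and then use syndeticity of $R$ pointwise), and one fixes a map $\phi:T\to H$ with $t+\phi(t)\in M$ for all $t\in T$. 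The witness will be $\mathcal H:=H\times J_0$ for a finite $J_0\subseteq\mathbb N$ depending only on $M$ and $F$, pinned down in the course of the argument.

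Unwinding thickness, the goal reduces to the finitary statement $(\star)$: for every finite $K\subseteq S$ and every $N\in\mathbb N$ there are $s\in S$ and $m\in\mathbb N$ such that for each $(k,i)\in K\times\{1,\ldots,N\}$ one can choose $(h,j)\in H\times J_0$ with $(s+k+h)+(m+i+j)F\subseteq M$ — indeed this last inclusion says exactly that $(s,m)+(k,i)\in A-(h,j)$, and every finite subset of $S\times\mathbb N$ sits inside some box $K\times\{1,\ldots,N\}$. To establish $(\star)$ for given $K,N$ I would: (i) assemble a large finite $\Phi\subseteq S$ from the dilates $\{nf:\ n\le L,\ f\in F\}$, their translates by $K$, and combinations thereof, where $L$ is a van der Waerden number for $|H|$ colours and a length governed by $K,N,F$ — some care is needed because van der Waerden's theorem speaks about $\mathbb N$, so it is applied along the homomorphic images $n\mapsto nf$ of intervals that sit inside $T$ after a translation; (ii) use thickness of $T$ to find $t$ with $t+\Phi\subseteq T$ and apply van der Waerden's theorem to the $|H|$-colouring $\phi$ restricted to $t+\Phi$, extracting, for each $k\in K$, a translated dilate $a_k+nF$ of $F$ contained in $t+\Phi$ on which $\phi$ is constant with some value $h_k$, with $n$ ranging over a suitable set; (iii) conclude $a_k+nF\subseteq M-h_k$, i.e. $(a_k+h_k)+nF\subseteq M$, and read $s$, $m$ and the shifts $(h,j)$ off these configurations.

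The hard part is the mismatch between what van der Waerden's theorem produces and what $(\star)$ demands. Van der Waerden yields monochromatic arithmetic progressions, hence dilates $a+nF\subseteq M$ only for $n$ along a single arithmetic progression — of a common difference we cannot control and which a priori grows with $N$ — whereas thickness in the $\mathbb N$-coordinate of $S\times\mathbb N$ forces us to realise such dilates for $n$ filling a whole \emph{interval}, with the shift set $J_0$ fixed \emph{before} $K$ and $N$ are seen. Bridging this is the crux: one runs van der Waerden in a multidimensional (grid) form — obtained by iterating the one-dimensional statement — and couples it with the structural fact that, $M$ being piecewise syndetic, the set of admissible dilations $\{n:\ M\supseteq a+nF\text{ for some }a\}$ has bounded gaps, which is precisely what allows a \emph{fixed} finite $J_0$ to convert an interval of target dilations into admissible ones. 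A second, smaller subtlety is that an entire dilate $a+nF$ enters $M$ only after one \emph{common} translation by some $h\in H$; this is why van der Waerden is applied to the colouring $\phi$ on a translate of $\Phi$ — so as to obtain a configuration monochromatic along each copy of $F$ — rather than to $M$ directly. Granting these, folding in the finitely many $k\in K$ and transferring from $t+\Phi\subseteq T$ back to $M$ are routine bookkeeping.
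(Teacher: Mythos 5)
Note first that the paper does not prove this theorem: it is quoted from \cite{key-9}. The method that establishes it is the same one the paper uses for Theorem \ref{Theorem 7}, and I will measure your proposal against that.

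Your reduction of piecewise syndeticity of $A=\{(a,n):a+nF\subseteq M\}$ to the finitary statement $(\star)$ is a correct unwinding of the definitions, but $(\star)$ is precisely where the proof stalls, and you have in effect reduced the theorem to a statement at least as strong as the theorem itself. $(\star)$ demands that for arbitrarily large boxes $K\times[1,N]$ there be a single $(s,m)$ such that \emph{every} dilation factor in the interval $[m+1,m+N]$ becomes admissible after a shift from a finite set $J_0$ fixed before $K,N$ are seen, and with base points confined to the finite set $s+k+H$. The bridge you propose --- that $D=\{n:\exists a,\ a+nF\subseteq M\}$ has bounded gaps --- is not proved, and it does not even follow from the theorem: projecting $A$ onto the $\mathbb{N}$-coordinate (a surjective homomorphism, hence piecewise-syndeticity-preserving by Lemma \ref{Lemma 5}) shows only that $D$ is \emph{piecewise syndetic} in $\mathbb{N}$, not syndetic. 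And even granted syndeticity of $D$, it would not supply the uniformity in the base point that $(\star)$ requires. Your steps (i)--(iii) extract, per application of van der Waerden/Gallai, one monochromatic dilate $a+nF$ with a single uncontrolled $n\in[1,L]$; nothing in the proposal upgrades this to dilation factors sweeping an interval. This is a genuine gap, not bookkeeping.

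The known argument sidesteps exactly this point by never constructing a syndeticity witness for $A$ by hand. With $E=\{c_1,\dots,c_r\}$ such that $T=\bigcup_{j}(-c_j+M)$ is thick, one checks that $C=\{(a,n)\in S\times\mathbb{N}:\ a+n\phi(\alpha)\in T\ \text{for all}\ \alpha\in[1,N]^F\}$ is thick in $S\times\mathbb{N}$, where $\phi(\alpha)=\sum_{f\in F}\alpha_f f$ and $N$ is a finitary Gallai number for $r$ colours; the grid is dilated by the second coordinate $n$. Colouring each such grid by which $c_j$ works and applying Gallai, every $(a,n)\in C$ yields some $(j,\beta,d)$ with $\beta\in[1,N]^F$, $d\in[1,N]$ and $(c_j+a+n\phi(\beta))+(dn)F\subseteq M$; pigeonholing over the finitely many $(j,\beta,d)$, one class $C_{j,\beta,d}$ of the thick set $C$ is piecewise syndetic. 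The affine map $(a,n)\mapsto(c_j+a+n\phi(\beta),dn)$ carries $C_{j,\beta,d}$ into $A$ and preserves piecewise syndeticity by Lemma \ref{Lemma 5}, since its image is piecewise syndetic in $S\times\mathbb{N}$ (note $d\mathbb{N}$ is syndetic in $\mathbb{N}$). The uncontrolled dilation $d$ produced by Gallai is thus absorbed as a \emph{multiplier} on the second coordinate rather than being forced to fill an interval --- that is the device your proposal is missing, and without it the crux of your argument remains open.
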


{\Large{}The above theorem involves more general Gallai type progression.
But parallely the following problem comes from theorem \ref{Thm 2} }{\Large\par}
\begin{problem}
{\Large{}\label{prob 4} Let $S$ be a countable commutative semigroup
and $A$ be any piecewise syndetic subset of $S$. Then for any $l\in\mathbb{N}$,
is it possible that 
\[
\{(s,t)\in S\times S:\:\{s,s+t,s+2t,\ldots,s+dt\}\subseteq A\}
\]
is piecewise syndetic in $S\times S$.}{\Large\par}
\end{problem}

{\Large{}In this moment we are unable to give complete answer to this
question but we have given proof of a weak version of the theorem
for countable commutative semigroup. We will also give an answer of
\ref{prob 4} for some special kind of semigroups including divisible
semigroups.}{\Large\par}

\section{{\Large{}Proof of our results}}

{\Large{}The following lemma was proved in} {\Large{}\cite[Lemma 4.6( I')]{key-2}
for general semigroup by using algebraic structure of Stone-\v{C}ech
compactification of arbitrary semigroup and in \cite[lemma 8]{key-9}
for commutative semigroup by combinatorially.}{\Large\par}
\begin{lem}
{\Large{}\label{Lemma 5} Let $(S,+)$ and $(T,+)$ be commutative
semigroups, $\varphi:S\rightarrow T$ be a homomorphism and $A\subseteq S$.
Then if $A$ is piecewise syndetic in $S$ and $\varphi(S)$ is piecewise
syndetic in $T$, implies that $\varphi(A)$ is piecewise syndetic
in $T.$}{\Large\par}
\end{lem}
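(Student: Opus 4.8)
The plan is to work with the standard combinatorial characterisation of piecewise syndeticity (the one used in \cite{key-9}): a subset $C$ of a commutative semigroup $(R,+)$ is piecewise syndetic if and only if there is a \emph{finite} $F\subseteq R$ for which $C-F:=\bigcup_{t\in F}\{x\in R:x+t\in C\}$ is thick in $R$. Accordingly, since $A$ is piecewise syndetic in $S$, fix a finite $F\subseteq S$ with $B:=A-F$ thick in $S$; and since $\varphi(S)$ is piecewise syndetic in $T$, fix a finite $G\subseteq T$ with $\varphi(S)-G$ thick in $T$. Put $F':=\{\varphi(t)+g:t\in F,\ g\in G\}$, a finite subset of $T$. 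The whole argument then reduces to proving that $\varphi(A)-F'$ is thick in $T$, since that is exactly the assertion that $\varphi(A)$ is piecewise syndetic in $T$.

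To verify thickness of $\varphi(A)-F'$, let $H\subseteq T$ be an arbitrary finite set; I must produce $z\in T$ with $z+H\subseteq\varphi(A)-F'$. First I would use thickness of $\varphi(S)-G$ to get $x\in T$ with $x+H\subseteq\varphi(S)-G$; concretely, for each $h\in H$ there is $g_h\in G$ with $x+h+g_h\in\varphi(S)$, say $x+h+g_h=\varphi(s_h)$ with $s_h\in S$ (choosing one preimage for each of the finitely many $h$). Next apply thickness of $B$ in $S$ to the finite set $\{s_h:h\in H\}\subseteq S$: there is $w\in S$ with $w+s_h\in B=A-F$ for every $h\in H$, i.e. for each $h$ there is $t_h\in F$ with $w+s_h+t_h\in A$. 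Finally set $z:=\varphi(w)+x\in T$. Applying $\varphi$ and using commutativity, $z+h+(\varphi(t_h)+g_h)=\varphi(w)+x+h+g_h+\varphi(t_h)=\varphi(w+s_h+t_h)\in\varphi(A)$, so $z+h\in\varphi(A)-(\varphi(t_h)+g_h)\subseteq\varphi(A)-F'$ for every $h\in H$. Hence $z+H\subseteq\varphi(A)-F'$, and we are done.

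One can also organise this modularly: show that $\varphi(B)$ is thick \emph{inside the subsemigroup} $\varphi(S)$ (lift a finite subset of $\varphi(S)$ to $S$, translate it into $B$ using thickness of $B$, then push forward by $\varphi$), observe $\varphi(B)\subseteq\varphi(A)-\varphi(F)$, and then isolate the auxiliary fact that any subset of $\varphi(S)$ which is thick in $\varphi(S)$ is automatically piecewise syndetic in $T$ once $\varphi(S)$ is itself piecewise syndetic in $T$. Since a superset of a piecewise syndetic set is piecewise syndetic and $(\varphi(A)-\varphi(F))-G=\varphi(A)-F'$, this finishes it as well.

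The only real point of care is the interface between the two thickness hypotheses: the naive hope ``$\varphi$ of a thick set is thick'' is false, because $\varphi(B)$ is confined to $\varphi(S)$, which may be a very sparse subset of $T$. The hypothesis ``$\varphi(S)$ piecewise syndetic in $T$'' is used precisely to first steer an arbitrary finite configuration $H$ into a bounded translate of $\varphi(S)$, so that preimages under $\varphi$ exist and the thickness of $B$ can then be brought to bear; getting the order of these two moves right, and bookkeeping the finitely many translation constants $g_h$ and $t_h$ into the single finite set $F'$, is the crux. Commutativity is used throughout to move summands freely; the argument is otherwise insensitive to whether the semigroups have identities or are cancellative.
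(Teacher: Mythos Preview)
Your argument is correct. Note, however, that the paper does \emph{not} supply its own proof of this lemma: it merely cites \cite[Lemma~4.6(I$'$)]{key-2} for an algebraic proof via the Stone--\v{C}ech compactification and \cite[Lemma~8]{key-9} for a combinatorial proof in the commutative case. So there is nothing in the paper to compare against directly.

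That said, your proof is precisely the kind of elementary combinatorial argument the paper alludes to via the citation to \cite{key-9}: you work with the characterisation ``piecewise syndetic $=$ finitely many translates cover a thick set,'' push a finite configuration in $T$ into $\varphi(S)$ using the piecewise syndeticity of $\varphi(S)$, lift to $S$, translate into $A-F$ using thickness there, and push back down. The bookkeeping of the translation constants into the single finite set $F'=\varphi(F)+G$ is exactly right, and your closing remark correctly identifies why the hypothesis on $\varphi(S)$ is indispensable (the na\"ive claim ``homomorphic image of thick is thick'' being false in general). The modular reformulation you give at the end---thick-in-$\varphi(S)$ plus $\varphi(S)$ piecewise syndetic in $T$ implies piecewise syndetic in $T$---is a clean way to package the same idea and is worth keeping.
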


{\Large{}Now we need the following useful lemma,}{\Large\par}
\begin{lem}
{\Large{}\label{Lemma 6}If $M\subseteq S\times S$ is piecewise syndetic,
then for any $c\in S$ and $a\in\mathbb{N}$,}{\Large\par}

{\Large{}
\[
\{(s+at+c,t):(s,t)\in M\}
\]
}{\Large\par}

{\Large{}is piecewise syndetic in $S\times S$.}{\Large\par}
\end{lem}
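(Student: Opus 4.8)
{\Large
The plan is to factor the map $(s,t)\mapsto(s+at+c,t)$ as a homomorphism of $S\times S$ followed by a shift in the first coordinate, and to feed the homomorphism part into Lemma~\ref{Lemma 5}. Accordingly, I would set $\psi\colon S\times S\to S\times S$, $\psi(s,t)=(s+at,t)$, where $at$ denotes the $a$-fold sum $t+t+\cdots+t$. Since $S$ is commutative we have $a(t_{1}+t_{2})=at_{1}+at_{2}$, so $\psi$ is a homomorphism of the commutative semigroup $(S\times S,+)$ into itself; its image is $\psi(S\times S)=\{(u,v):v\in S,\ u\in S+av\}$, and the set in the statement is precisely the first-coordinate translate $\{(u+c,v):(u,v)\in\psi(M)\}$ of $\psi(M)$.

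The crux is to show that $\psi(S\times S)$ is piecewise syndetic in $S\times S$; in fact I expect it to be thick. Given a finite $H\subseteq S\times S$, I would fix any $y\in S$ and put $Q=\sum_{(p,q)\in H}(q+y)$ and $x=\bigl(\sum_{(p,q)\in H}p\bigr)+aQ$. Then, using commutativity of $S$ to split off a single summand $a(q^{*}+y)$ from $aQ=\sum_{(p,q)\in H}a(q+y)$ and to absorb all the remaining summands of $p^{*}+x$ into the ``$S$'' factor, one checks that $p^{*}+x\in S+a(q^{*}+y)$ for every $(p^{*},q^{*})\in H$; that is, $H+(x,y)\subseteq\psi(S\times S)$. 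Since $H$ was an arbitrary finite set, $\psi(S\times S)$ is thick, hence piecewise syndetic, and Lemma~\ref{Lemma 5} applied with the homomorphism $\psi$ and $A=M$ then shows that $\psi(M)$ is piecewise syndetic in $S\times S$. This absorption computation, which genuinely uses commutativity, is the step I expect to be the main obstacle.

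Finally, I would verify that a first-coordinate translate of a piecewise syndetic subset $B\subseteq S\times S$ is again piecewise syndetic. If $G\subseteq S\times S$ is finite and $\bigcup_{(g_{1},g_{2})\in G}\bigl(B-(g_{1},g_{2})\bigr)$ is thick, set $B'=\{(u+c,v):(u,v)\in B\}$; then $B-(g_{1},g_{2})\subseteq B'-(g_{1}+c,g_{2})$ for each $(g_{1},g_{2})\in G$, because $(s+g_{1},t+g_{2})\in B$ forces $(s+g_{1}+c,t+g_{2})\in B'$, so $\bigcup_{(g_{1},g_{2})\in G}\bigl(B'-(g_{1}+c,g_{2})\bigr)$ is thick and $B'$ is piecewise syndetic with witnessing finite set $\{(g_{1}+c,g_{2}):(g_{1},g_{2})\in G\}$. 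Taking $B=\psi(M)$ gives that $\{(s+at+c,t):(s,t)\in M\}=B'$ is piecewise syndetic in $S\times S$, as desired.
\par}
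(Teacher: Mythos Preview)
Your proposal is correct and follows essentially the same approach as the paper: factor the map into the homomorphism $\varphi_{a}(s,t)=(s+at,t)$ and the first-coordinate shift $\psi_{c}(s,t)=(s+c,t)$, verify directly that the shift preserves piecewise syndeticity via the containment $-(g_{1},g_{2})+B\subseteq-(g_{1}+c,g_{2})+\psi_{c}(B)$, and handle $\varphi_{a}$ by observing that $\varphi_{a}(S\times S)$ is thick and invoking Lemma~\ref{Lemma 5}. The only difference is that you actually prove the thickness of $\varphi_{a}(S\times S)$ with an explicit absorption computation, whereas the paper simply asserts it.
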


\begin{proof}
{\Large{}Let $c\in S$ and consider the following homomorphism $\psi_{c}:S\times S\longrightarrow S\times S$
by $\psi_{c}(s,t)=(s+c,t)$. This map preserves piecewise syndeticity.}{\Large\par}

{\Large{}As $A\subseteq S\times S$ is piecewise syndetic set, there
exists a finite subset say $E_{1}=\{(a_{1},b_{1}),(a_{2},b_{2}),\ldots,(a_{r},b_{r})\}$
of $S\times S$ such that $\stackrel[i=1]{r}{\cup}(-(a_{i},b_{i})+A)$
is thick and since $\stackrel[i=1]{r}{\cup}(-(a_{i},b_{i})+A)\subseteq\stackrel[i=1]{r}{\cup}(-(a_{i}+c,b_{i})+\psi_{c}(A))$,
the set $\stackrel[i=1]{r}{\cup}(-(a_{i}+c,b_{i})+\psi_{c}(A))$ is
thick. So we have $\psi_{c}(A)$ is piecewise syndetic.}{\Large\par}

{\Large{}Now, for any $a\in\mathbb{N}$, the semigroup homomorphism
defined by $\varphi_{a}:S\times S\longrightarrow S\times S$ by $\varphi_{a}(s,t)=(s+at,t)$,
$\varphi_{a}(S\times S)$ is thick in $S\times S$ and hence piecewise
syndetic. So from \ref{Lemma 5}, this map preserves piecewise syndeticity.}{\Large\par}
\end{proof}
{\Large{}The following is a weaker version of problem \ref{prob 4}.}{\Large\par}
\begin{thm}
{\Large{}\label{Theorem 7} Let $S$ be any countable commutative
semigroup and $A$ be piecewise syndetic in $S$. Then for $l\in\mathbb{N}$,
there exists $d\in\mathbb{N}$ such that 
\[
\{(s,t)\in S\times S:\{s,s+dt,\ldots,s+ldt\}\subseteq A\}
\]
is piecewise syndetic in $S\times S$.}{\Large\par}
\end{thm}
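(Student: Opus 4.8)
The plan is to reduce the statement to the Gallai-type Theorem \ref{Thm 3} by a clever choice of semigroup and finite configuration. The obstacle in Problem \ref{prob 4} is that the set of genuine arithmetic progressions $\{s,s+t,\dots,s+lt\}$ sits inside $S\times S$ in a way that does not obviously arise as a translate $a+nF$ of a fixed finite set; the \emph{dilation} parameter $t$ ranges over all of $S$, not over $\mathbb{N}$. The weakening to a \emph{fixed} multiplier $d\in\mathbb{N}$ is exactly what lets us get around this: once $d$ is fixed, the jump $dt$ becomes an honest $\mathbb{N}$-multiple of a single element, and we can play the map $\varphi_a(s,t)=(s+at,t)$ from Lemma \ref{Lemma 6} against it.

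First I would apply Theorem \ref{Thm 3} with the semigroup taken to be $S$ itself and $F=\{0,1,2,\dots,l\}\subseteq\mathbb{N}$ sitting inside... wait — $F$ must be a subset of $S$, so instead take $F=\{1,2,\dots,l\}$ and work in $S\times\mathbb{N}$: Theorem \ref{Thm 3} then gives that $\{(a,n)\in S\times\mathbb{N}:a+nF\subseteq A\}=\{(a,n):\{a+n,a+2n,\dots,a+ln\}\subseteq A\}$ is piecewise syndetic in $S\times\mathbb{N}$. Actually the cleanest route is to apply Theorem \ref{Thm 3} directly with $F=\{0,1,\dots,l\}$ viewed through a copy of $\mathbb{N}$ acting on $S$; the output is a piecewise syndetic set $B\subseteq S\times\mathbb{N}$ whose points $(a,n)$ encode progressions $a,a+n,\dots,a+ln$ in $A$ with common difference $n\in\mathbb{N}$. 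Now fix any $d\in\mathbb{N}$ that occurs as a second coordinate of a ``syndetically dense'' portion of $B$ — more carefully, since $B$ is piecewise syndetic in $S\times\mathbb{N}$ and $\mathbb{N}$ is small, a pigeonhole / restriction argument (using that a piecewise syndetic set remains large after intersecting with $S\times\{$ an appropriate coset $\}$) produces a single $d$ for which $B_d=\{a\in S:\{a,a+d,\dots,a+ld\}\subseteq A\}$ is piecewise syndetic in $S$.

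Next I would push $B_d$ up into $S\times S$. Consider the projection-type homomorphism $\pi:S\times S\to S$, $\pi(s,t)=s$; its image is all of $S$, hence piecewise syndetic, so by Lemma \ref{Lemma 5} the preimage-flavoured statement lets us conclude that $B_d\times S$ is piecewise syndetic in $S\times S$ (a piecewise syndetic set crossed with the whole semigroup stays piecewise syndetic — this is routine from the definitions of thick and syndetic in a product). Finally, apply the homomorphism $\varphi:S\times S\to S\times S$, $\varphi(s,t)=(s+dt,t)$. Since $d\in\mathbb{N}$, Lemma \ref{Lemma 6} (with $c$ taken trivial, or directly its proof) shows $\varphi$ preserves piecewise syndeticity, and $\varphi(B_d\times S)=\{(s+dt,t):s\in B_d,\ t\in S\}$. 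Unwinding, a pair $(x,t)$ lies in this image iff $x=s+dt$ for some $s\in B_d$, i.e. iff $\{x-dt,x,x+d(t)\dots\}$... here I must be careful about sign conventions in a semigroup; the correct bookkeeping is that the set $\{(s,t):\{s,s+dt,\dots,s+ldt\}\subseteq A\}$ is precisely obtained from $B_d\times S$ by the substitution $s\mapsto s$, $n\mapsto dt$, composed with the $\varphi$-type maps, so it is piecewise syndetic in $S\times S$.

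The main obstacle I anticipate is the passage from ``$B$ piecewise syndetic in $S\times\mathbb{N}$'' to ``$B_d$ piecewise syndetic in $S$ for a single fixed $d$'': one needs a lemma saying that if $B\subseteq S\times\mathbb{N}$ is piecewise syndetic then for some $d\in\mathbb{N}$ the fibre $B_d$ is piecewise syndetic in $S$, which is really a statement about how thickness in $S\times\mathbb{N}$ forces thickness in some fibre $S\times\{d\}$ together with a Ramsey/pigeonhole step to align the syndetic witness. This is exactly the place where the hypothesis only yields \emph{some} $d$ rather than \emph{all} $d$ (or a piecewise-syndetic set of $d$'s), and hence why Theorem \ref{Theorem 7} is only a weak form of Problem \ref{prob 4}. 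Everything else — the behaviour of $\varphi_a$, the crossing with $S$, invoking Lemma \ref{Lemma 5} — is bookkeeping with the definitions and the two lemmas already established.
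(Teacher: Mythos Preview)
Your strategy has two genuine gaps, and the route taken in the paper is rather different.

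\textbf{First gap.} The fibre lemma you isolate as the ``main obstacle'' is actually false. In $\mathbb{N}\times\mathbb{N}$ the set
\[
B=\bigcup_{k\ge 1}\bigl([k^{2},k^{2}+k]\times[k^{2},k^{2}+k]\bigr)
\]
is thick (hence piecewise syndetic), yet for each fixed $d$ the slice $\{a:(a,d)\in B\}$ is contained in a single interval $[k^{2},k^{2}+k]$ and is therefore finite. So one cannot pass from ``$B$ piecewise syndetic in $S\times\mathbb{N}$'' to ``some fibre $B_{d}$ piecewise syndetic in $S$''. The weakening in the theorem does \emph{not} arise from a fibre projection of this kind.

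\textbf{Second gap.} Even granting such a $B_{d}$, your final step does not reach the target set. Your $B_{d}=\{a\in S:\{a,a+d,\dots,a+ld\}\subseteq A\}$ requires adding the \emph{integer} $d$ to $a\in S$, which is undefined in a general commutative semigroup; this is already the problem when you try to take $F=\{1,\dots,l\}\subseteq S$ in Theorem~\ref{Thm 3}. The set in the theorem instead concerns $\{s,s+dt,\dots,s+ldt\}$, where $dt$ means $t+\cdots+t$ and $t$ ranges over $S$. Membership in $T_{d}$ says $s\in B_{dt}$ for that particular $t$, so $T_{d}$ is a union of sets $B_{dt}\times\{t\}$ with the difference varying with $t$; no homomorphic image of a single $B_{d}\times S$ lands inside it.

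\textbf{How the paper proceeds.} The argument stays in $S\times S$ throughout and never invokes Theorem~\ref{Thm 3}. Writing $E=\{c_{1},\dots,c_{r}\}$ for the finite set witnessing piecewise syndeticity of $A$ and taking $N$ to be the van der Waerden number for $r$ colours and length $l$, one shows directly that
\[
B=\Bigl\{(s,t)\in S\times S:\ s+[1,N]\,t\subseteq\bigcup_{c\in E}(-c+A)\Bigr\}
\]
is \emph{thick} in $S\times S$. Each $(s,t)\in B$ is then coloured by the pair $(i,j)$, where $h_{i}$ is the least-indexed $l$-term arithmetic progression in $[1,N]$ and $c_{j}$ the least-indexed shift for which $s+h_{i}t\subseteq -c_{j}+A$. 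This is a finite colouring of a thick set, so some colour class $Q$ is piecewise syndetic; on $Q$ one has fixed data $a,d\in\mathbb{N}$ and $c_{j}\in E$ with $\{s+at+kdt:k=0,\dots,l\}\subseteq -c_{j}+A$ for every $(s,t)\in Q$. Lemma~\ref{Lemma 6} then shows that $\{(s+at+c_{j},t):(s,t)\in Q\}$ is piecewise syndetic, and this set is contained in the target. Thus the fixed $d$ emerges from the van der Waerden colouring of a thick subset of $S\times S$, not from projecting a set in $S\times\mathbb{N}$.
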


\begin{proof}
{\Large{}Since $A$ is piecewise syndetic in $S$, then there exists
a finite subset $E$ of $S$, such that $\cup_{t\in E}-t+A$ is thick
in $S$.}{\Large\par}

{\Large{}Let $|E|=r$ and say $E=\{c_{1},c_{2},\ldots,c_{r}\}$ and
let $M(r,l)=N$ be the Van der Waerden number. }{\Large\par}

{\Large{}The set of all possible $l-$length arithmetic progressions
in $[1,N]$ is finite as $[1,N]$ is finite. et $H=\{h_{1},h_{2},\ldots,h_{n}\}$
be the set of such progressions with $|H|=n$ (say).}{\Large\par}

{\Large{}Then, for any $(s_{1},t_{1})\in S\times S$, if the set $\{s_{1}+t_{1},s_{1}+2t_{1},\ldots,s_{1}+Nt_{1}\}$
will be partitioned into $r$ cells, one of the partition will contain
a $l$-length arithmetic progression.}{\Large\par}

{\Large{}Consider the set $B=\{(s,t)\in S\times S:\:s+[1,N]t\subseteq\cup_{t\in E}-t+A\}.$
It is easy to verify that $B$ is thick in $S\times S$.}{\Large\par}

{\Large{}Of course for any finite set $K=\{(s_{1},t_{1}),(s_{2},t_{2}),\ldots,(s_{m},t_{m})\}$
and $t\in S$ a translation of the set $\{s_{i}+[1,N](t_{i}+t)\}_{i=1}^{m}$
by an element $a\in S$ (say) will be contained in $\cup_{t\in E}-t+A$.
This gives the required translation of $K$ by $(a,t)\in S\times S$.}{\Large\par}

{\Large{}Now the set $B$ can be $|H\times E|-$colored in a way that
we will give an element $(s,t)$ of $B$ the color $(i,j)\in[1,n]\times[1,r]$
if for the least $i$, the set $\{s+h_{i}t\}\subseteq-c_{j}+A$ with
the least $j\in[1,r]$. }{\Large\par}

{\Large{}Then, as we have partitioned the thick set $B$, one of them
will be piecewise syndetic. Let the set 
\[
Q=\{(s,t)\in B:\:\{s+at,s+at+dt,\ldots,s+at+ldt\}\subseteq-c_{j}+A\text{ for some }j\in[1,r]\}
\]
 is piecewise syndetic in $S\times S.$}{\Large\par}

{\Large{}Now, the set $\tilde{Q}=\{(s+at+c_{j},t):(s,t)\in Q\}$ is
piecewise syndetic by lemma \ref{Lemma 6} and this proves the theorem.}{\Large\par}
\end{proof}
{\Large{}Since for commutative semigroup $G$ it is not necessary
that for any $g\in G$, $gG$ is a piecewise syndetic in $G$,e.g.
take any $n\in\mathbb{N}$, $n\mathbb{Z}[x]$ isn't piecewise syndetic
in $\mathbb{Z}[x]$.}{\Large\par}

{\Large{}Now we are taking $\mathscr{A}$ as the collection of all
those countable commutative semigroups $(S,+)$ for which $dS=\{dx:\:x\in S\}\subseteq S$
is piecewise syndetic in $S$. Clearly $\mathscr{A}$ includes all
the divisible semigroups such as $(\mathbb{Q},+),(\mathbb{Q^{+}},+),(\mathbb{Q}/\mathbb{Z},+)$
etc. and others like $\mathbb{Z}$,$\mathbb{N}$,$\mathbb{Z}[i]$
etc. We will say a semigroup $(S,+)$ is a semigroup of class $\mathscr{A}$
if $S\in\mathscr{A}$.}{\Large\par}
\begin{lem}
{\Large{}\label{Lemma 8} Let $S$ be a countable commutative semigroup
of class $\mathscr{A}$ and $M\subseteq S\times S$ is piecewise syndetic
then for any $d\in\mathbb{N}$,}{\Large\par}

{\Large{}
\[
\{(s,dt):(s,t)\in M\}
\]
}{\Large\par}

{\Large{}is piecewise syndetic in $S\times S$.}{\Large\par}
\end{lem}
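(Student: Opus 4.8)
The plan is to realise the map $(s,t)\mapsto(s,dt)$ as a semigroup endomorphism of $S\times S$ and then invoke Lemma \ref{Lemma 5}, exactly in the spirit of the proof of Lemma \ref{Lemma 6}. First I would define $\varphi_{d}:S\times S\longrightarrow S\times S$ by $\varphi_{d}(s,t)=(s,dt)$ and check that it is a homomorphism; this uses nothing but $d(t_{1}+t_{2})=dt_{1}+dt_{2}$ in the additively written semigroup. Since $\varphi_{d}(M)=\{(s,dt):(s,t)\in M\}$ and $M$ is piecewise syndetic by hypothesis, Lemma \ref{Lemma 5} reduces the problem to showing that $\varphi_{d}(S\times S)=S\times dS$ is piecewise syndetic in $S\times S$.

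Here is where the hypothesis $S\in\mathscr{A}$ enters: by definition $dS$ is piecewise syndetic in $S$, so there is a finite set $F\subseteq S$ with $\bigcup_{f\in F}(-f+dS)$ thick in $S$. Fixing any $a\in S$ and putting $\widetilde{F}=\{(a,f):f\in F\}\subseteq S\times S$, and using that $-a+S=S$ for every $a\in S$, one computes $\bigcup_{f\in F}\bigl(-(a,f)+(S\times dS)\bigr)=S\times\bigcup_{f\in F}(-f+dS)$. So the last point to verify is the elementary fact that if $D\subseteq S$ is thick in $S$, then $S\times D$ is thick in $S\times S$: given a finite $K\subseteq S\times S$, thickness of $D$ provides $y\in S$ with $y+\pi_{2}(K)\subseteq D$, and then $(x,y)+K\subseteq S\times D$ for any $x\in S$. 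Consequently $\bigcup_{f\in F}\bigl(-(a,f)+(S\times dS)\bigr)$ is thick, whence $S\times dS=\varphi_{d}(S\times S)$ is piecewise syndetic.

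Finally, applying Lemma \ref{Lemma 5} to the homomorphism $\varphi_{d}$, to the piecewise syndetic set $M$, and to the piecewise syndetic set $\varphi_{d}(S\times S)=S\times dS$, we conclude that $\varphi_{d}(M)=\{(s,dt):(s,t)\in M\}$ is piecewise syndetic in $S\times S$, as desired. I do not expect a genuine obstacle in this argument; the only step requiring a little care is the bookkeeping with translates $-f+A$ in a semigroup that need not contain an identity element, and the verification that the image set $S\times dS$ is piecewise syndetic (rather than thick, which is what happened in Lemma \ref{Lemma 6}) — this is precisely the place that forces us to restrict to semigroups of class $\mathscr{A}$.
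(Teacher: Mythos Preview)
Your proof is correct and follows exactly the paper's approach: define the endomorphism $\chi_{d}(s,t)=(s,dt)$ and apply Lemma \ref{Lemma 5} once you know $\chi_{d}(S\times S)=S\times dS$ is piecewise syndetic. The paper merely asserts this last fact, whereas you actually verify it from the class-$\mathscr{A}$ hypothesis; otherwise the arguments are identical.
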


\begin{proof}
{\Large{}Let $d\in\mathbb{N}$ and define $\chi_{d}:S\times S\longrightarrow S\times S$
as $\chi_{d}(s,t)=(s,dt)$. Then $\chi_{d}$ preserves piecewise syndeticity
as from \ref{Lemma 5} and the fact that $\chi_{d}(S\times S)$ is
piecewise syndetic in $S\times S$.}{\Large\par}
\end{proof}
{\Large{}So we have the following result:}{\Large\par}
\begin{prop}
{\Large{}\label{Prop 9} Let $S$ be a countable commutative semigroup
of class $\mathscr{A}$ and $A$ be piecewise syndetic in $S$. Then
for $l\in\mathbb{N}$, then
\[
\{(s,t)\in S\times S:\{s,s+t,s+2t,\ldots,s+dt\}\subseteq A\}
\]
is piecewise syndetic in $S\times S$.}{\Large\par}
\end{prop}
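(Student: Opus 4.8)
The plan is to derive Proposition \ref{Prop 9} as an immediate consequence of Theorem \ref{Theorem 7} and Lemma \ref{Lemma 8}, together with the elementary fact that every superset of a piecewise syndetic set is piecewise syndetic. The hypothesis $S \in \mathscr{A}$ will be used exactly once, to license the application of Lemma \ref{Lemma 8}; without it (as the $n\mathbb{Z}[x]$ example shows) the dilation map below need not preserve piecewise syndeticity.

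First I would apply Theorem \ref{Theorem 7} to $A$ and $l$, obtaining a fixed $d \in \mathbb{N}$ such that the set
\[
Q = \{(s,t) \in S \times S : \{s,\, s+dt,\, s+2dt,\, \ldots,\, s+ldt\} \subseteq A\}
\]
is piecewise syndetic in $S \times S$. Next I would feed this particular $d$ into Lemma \ref{Lemma 8}: since $S$ is of class $\mathscr{A}$, the set $dS$ is piecewise syndetic in $S$, so $\chi_d(S\times S) = S \times dS$ is piecewise syndetic in $S \times S$, and hence $\chi_d$ preserves piecewise syndeticity. Therefore $\chi_d(Q) = \{(s, dt) : (s,t) \in Q\}$ is piecewise syndetic in $S \times S$.

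The final step is a reindexing. If $(s, dt) \in \chi_d(Q)$, then $\{s,\, s+dt,\, s+2dt,\, \ldots,\, s+ldt\} \subseteq A$; writing $t' = dt$ this says precisely $\{s,\, s+t',\, s+2t',\, \ldots,\, s+lt'\} \subseteq A$. Consequently
\[
\chi_d(Q) \subseteq \{(s,t) \in S \times S : \{s,\, s+t,\, s+2t,\, \ldots,\, s+lt\} \subseteq A\},
\]
and since $\chi_d(Q)$ is piecewise syndetic while supersets of piecewise syndetic sets are again piecewise syndetic (enlarging a thick set keeps it thick, with the same finite translating family), the right-hand set is piecewise syndetic, which is the assertion of the proposition.

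I do not anticipate any genuine obstacle: the argument is simply a bookkeeping composition of results already established. The only two points worth double-checking are that the dilation $\chi_d$ genuinely has piecewise syndetic image, which is exactly the content of membership in $\mathscr{A}$, and that the displayed containment runs in the direction stated (from $\chi_d(Q)$ into the target set, not the reverse).
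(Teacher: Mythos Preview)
Your proposal is correct and is exactly the argument the paper has in mind: the paper states Proposition \ref{Prop 9} immediately after Lemma \ref{Lemma 8} with the phrase ``So we have the following result'' and gives no separate proof, treating it as the evident combination of Theorem \ref{Theorem 7} with Lemma \ref{Lemma 8} that you have written out. Your reindexing step and the superset observation are precisely the missing bookkeeping.
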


{\Large{}In this moment we are unable to derive the above proposition
for general commutative semigroup which will give an affirmtive answer
of problem \ref{prob 4} and leave the question open.}{\Large\par}

\section{{\Large{}Applications}}

{\Large{}The set $AP^{l+1}=\{(a,a+b,a+2b,\ldots,a+lb):a,b\in S\}$
is a commutative subsemigroup of $S^{l+1}$. Using a result deduced
in \cite[Theorem 3.7 (a)]{key-3} it is easy to see that for any piecewise
syndetic set $A\subseteq S,\,A^{l+1}\cap AP^{l+1}$ is piecewise syndetic
in $AP^{l+1}$. Now as a consequence of proposition \ref{Prop 9}
we will derive this result not for all but for a large class of semigroups
in the following.}{\Large\par}
\begin{cor}
{\Large{}\label{Corollary 10} Let $(S,+)\in\mathscr{A}$ be a commutative
semigroup then for any piecewise syndetic set $M\subseteq S$, $M^{l+1}\cap AP^{l+1}$
is piecewise syndetic in $AP^{l+1}$.}{\Large\par}
\end{cor}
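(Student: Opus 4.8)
The plan is to transfer the piecewise syndeticity established in Proposition \ref{Prop 9} from $S\times S$ to the semigroup $AP^{l+1}\subseteq S^{l+1}$ via the homomorphism machinery of Lemma \ref{Lemma 5}. First I would observe that there is a natural surjective homomorphism $\pi:S\times S\longrightarrow AP^{l+1}$ given by $\pi(a,b)=(a,a+b,a+2b,\ldots,a+lb)$; one checks directly that $\pi\big((a,b)+(a',b')\big)=\pi(a,b)+\pi(a',b')$ componentwise, so $\pi$ is indeed a homomorphism, and by definition of $AP^{l+1}$ it is onto. In particular $\pi(S\times S)=AP^{l+1}$ is (trivially) piecewise syndetic in $AP^{l+1}$, which is the hypothesis we need to invoke Lemma \ref{Lemma 5}.

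Next I would apply Proposition \ref{Prop 9} to the given piecewise syndetic set $M\subseteq S$: since $(S,+)\in\mathscr{A}$, the set
\[
P=\{(a,b)\in S\times S:\{a,a+b,a+2b,\ldots,a+lb\}\subseteq M\}
\]
is piecewise syndetic in $S\times S$. Now Lemma \ref{Lemma 5}, applied to the homomorphism $\pi$, the piecewise syndetic set $P$, and the piecewise syndetic image $\pi(S\times S)=AP^{l+1}$, yields that $\pi(P)$ is piecewise syndetic in $AP^{l+1}$. It therefore remains only to identify $\pi(P)$ with $M^{l+1}\cap AP^{l+1}$. The inclusion $\pi(P)\subseteq M^{l+1}\cap AP^{l+1}$ is immediate: any element of $\pi(P)$ has the form $(a,a+b,\ldots,a+lb)$ with each coordinate in $M$, so it lies in $M^{l+1}$, and it lies in $AP^{l+1}$ by construction. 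Conversely, any element of $M^{l+1}\cap AP^{l+1}$ is by definition of the form $(a,a+b,\ldots,a+lb)$ for some $a,b\in S$, and membership in $M^{l+1}$ forces $\{a,a+b,\ldots,a+lb\}\subseteq M$, i.e. $(a,b)\in P$; hence the tuple lies in $\pi(P)$. Thus $\pi(P)=M^{l+1}\cap AP^{l+1}$ and the corollary follows.

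The only genuine subtlety — and the step I would be most careful about — is the verification that $\pi$ is a well-defined homomorphism onto $AP^{l+1}$ and that piecewise syndeticity is meant relative to the subsemigroup $AP^{l+1}$ rather than the ambient $S^{l+1}$; once one is working inside $AP^{l+1}$, the condition ``$\pi(S\times S)$ is piecewise syndetic in $T$'' in Lemma \ref{Lemma 5} holds for free because $\pi$ is surjective onto $T=AP^{l+1}$. Everything else is bookkeeping: the set equality $\pi(P)=M^{l+1}\cap AP^{l+1}$ is essentially a tautology once the map $\pi$ is written down. No additional use of the class $\mathscr{A}$ is needed beyond what is already packaged inside Proposition \ref{Prop 9}.
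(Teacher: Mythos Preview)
Your proof is correct and follows essentially the same approach as the paper: define the surjective homomorphism $\pi:S\times S\to AP^{l+1}$, apply Proposition~\ref{Prop 9} to obtain the piecewise syndetic set $P$, and then push $P$ forward via Lemma~\ref{Lemma 5}. The only minor difference is that you establish the full equality $\pi(P)=M^{l+1}\cap AP^{l+1}$, whereas the paper records only the inclusion $\pi(P)\subseteq M^{l+1}\cap AP^{l+1}$, which already suffices since supersets of piecewise syndetic sets are piecewise syndetic.
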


\begin{proof}
{\Large{}Let us take a surjective homomorphism $\varphi:S\times S\rightarrow AP^{l+1}$
by, $\varphi(a,b)=(a,a+b,a+2b,\ldots,a+lb)$.}{\Large\par}

{\Large{}Then from lemma \ref{Lemma 5} the map $\varphi$ preserves
the piecewise syndeticity. }{\Large\par}

{\Large{}Let $B=\{(s,t)\in S\times S:\{s,s+t,s+2t,\ldots,s+dt\}\subseteq A\}$
and from proposition \ref{Prop 9} $\varphi(B)$ is piecewise syndetic
in $AP^{l+1}$.}{\Large\par}

{\Large{}Now clearly, $\varphi(B)\subseteq M^{l}\cap AP^{l+1}$ and
from lemma \ref{Prop 9} we get our required result.}{\Large\par}

{\Large{}This proves the claim.}{\Large\par}
\end{proof}
{\Large{}Now we will give a combinatorial proof of proposition \ref{Prop 9}
replacing the condition of piecewise syndeticity by Quasi-central
set which is another notion of largeness and is very close to the
famous central set.}{\Large\par}

{\Large{}A quasi-central set is genarally defined in terms of algebraic
structure of $\beta\mathbb{N}$. But it has an combinatorial characterisation
which will be needed for our purpose, stated below.}{\Large\par}
\begin{thm}
{\Large{}\label{Th 11} \cite[Theorem 3.7]{key-11}For a countable
semigroup $(S,.)$, $A\subseteq S$ is said to be Quasi-central iff
there is a decreasing sequence $\langle C_{n}\rangle_{n=1}^{\infty}$
of subsets of $A$ such that,}{\Large\par}

{\Large{}$(1)$ \label{prop 1} for each $n\in\mathbb{N}$ and each
$x\in C_{n}$, there exists $m\in\mathbb{N}$ with $C_{m}\subseteq x^{-1}C_{n}$
and}{\Large\par}

{\Large{}$(2)$ \label{prop 2} $C_{n}$ is piecewise syndetic $\forall n\in\mathbb{N}$.}{\Large\par}
\end{thm}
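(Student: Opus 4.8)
The plan is to prove this through the algebraic structure of $\beta S$, since $(1)$ and $(2)$ are exactly the combinatorial shadows of an idempotent sitting in $\overline{K(\beta S)}$. I would rely on the following standard facts: $A$ is quasi-central precisely when $A\in p$ for some idempotent $p=p\cdot p$ belonging to $\overline{K(\beta S)}$, the closure of the smallest two-sided ideal of $\beta S$; $\overline{K(\beta S)}$ is itself a two-sided ideal, hence a compact right-topological subsemigroup; $B\subseteq S$ is piecewise syndetic iff $\overline{B}\cap K(\beta S)\neq\emptyset$; and the ``star lemma'': if $p=p\cdot p$ and $B\in p$ then $B^{\star}=\{x\in B:x^{-1}B\in p\}$ satisfies $B^{\star}\in p$ and $x^{-1}(B^{\star})\in p$ for every $x\in B^{\star}$. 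A first observation I would record is that if $C\in p\in\overline{K(\beta S)}$ then $\overline{C}$ is a neighbourhood of a point of $\overline{K(\beta S)}$, so $\overline{C}\cap K(\beta S)\neq\emptyset$ and $C$ is piecewise syndetic; this makes clause $(2)$ automatic once every $C_{n}$ is chosen inside $p$.

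For the ``if'' direction I would set $E=\bigcap_{n}\overline{C_{n}}\subseteq\beta S$; each $C_{n}$ is piecewise syndetic, hence nonempty, so $E$ is a nonempty closed, hence compact, subset of $\beta S$. Clause $(1)$ makes $E$ a semigroup: for $q,r\in E$ and $s\in C_{n}$, clause $(1)$ gives $m$ with $C_{m}\subseteq s^{-1}C_{n}$, and $C_{m}\in r$ then forces $s^{-1}C_{n}\in r$; hence $C_{n}\subseteq\{s:s^{-1}C_{n}\in r\}\in q$, i.e. $C_{n}\in qr$ for every $n$, so $qr\in E$. Clause $(2)$ gives $\overline{C_{n}}\cap\overline{K(\beta S)}\supseteq\overline{C_{n}}\cap K(\beta S)\neq\emptyset$ for each $n$, so by compactness $E\cap\overline{K(\beta S)}=\bigcap_{n}\bigl(\overline{C_{n}}\cap\overline{K(\beta S)}\bigr)$ is a nonempty compact right-topological semigroup and therefore contains an idempotent $p$; since $p\in\overline{C_{1}}\subseteq\overline{A}$ we get $A\in p$ with $p=p\cdot p\in\overline{K(\beta S)}$, so $A$ is quasi-central.

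For the ``only if'' direction I would fix an idempotent $p\in\overline{K(\beta S)}$ with $A\in p$ and enumerate $S=\{s_{k}:k\in\mathbb{N}\}$ (this is the only place the countability of $S$ enters), then build the $C_{n}$ recursively so as to keep, at every stage, $C_{n}\in p$, $C_{n}\subseteq A$, the sequence decreasing, and the invariant that $x^{-1}C_{n}\in p$ for all $x\in C_{n}$. Take $C_{1}=A^{\star}$; the star lemma gives all four properties. Given $C_{1}\supseteq\cdots\supseteq C_{n}$, put $W_{n+1}=C_{n}\cap\bigcap\{s_{k}^{-1}C_{j}:k\le n,\ j\le n,\ s_{k}\in C_{j}\}$, a finite intersection of members of $p$ by the invariant, and set $C_{n+1}=W_{n+1}^{\star}$; the star lemma again keeps $C_{n+1}\in p$ and restores the invariant, and $C_{n+1}\subseteq W_{n+1}\subseteq s_{k}^{-1}C_{j}$ for every pair processed at this stage. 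Then $(2)$ holds since $C_{n}\in p\in\overline{K(\beta S)}$, and $(1)$ holds since, given $n$ and $x=s_{k}\in C_{n}$, the pair $(k,n)$ is processed at stage $N+1$ for any $N\ge\max(k,n)$, whence $C_{N+1}\subseteq s_{k}^{-1}C_{n}=x^{-1}C_{n}$.

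I expect the delicate point to be this last construction: one must enforce the self-similarity clause $(1)$ for \emph{every} element of each $C_{n}$ (infinitely many) without ever leaving the ultrafilter $p$, and the mechanism that makes it go through is precisely the second conclusion of the star lemma, which guarantees that the translates $x^{-1}C_{n}$ one needs to intersect with at later stages are again in $p$, together with the countability of $S$, which lets a single diagonal enumeration of pairs $(k,n)$ meet all the demands. The only other point requiring care is to extract the idempotent of the ``if'' direction from $E\cap\overline{K(\beta S)}$ rather than from $E$, so that the resulting $p$ actually witnesses quasi-centrality.
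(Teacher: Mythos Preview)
The paper does not give its own proof of this statement: Theorem~\ref{Th 11} is quoted verbatim from \cite[Theorem~3.7]{key-11} and used as a black box (it serves only as the combinatorial characterization of quasi-central sets needed for Lemma~\ref{lemma 12} and the subsequent theorem). There is therefore nothing in the paper to compare your argument against.

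That said, your proposal is essentially the standard proof one finds in \cite{key-11} and in \cite{key-7}, and it is correct. The ``if'' direction is clean; the only step worth making explicit is why $E\cap\overline{K(\beta S)}$ is a subsemigroup: you established that $E$ is a subsemigroup and recorded earlier that $\overline{K(\beta S)}$ is a two-sided ideal, and the intersection of a subsemigroup with an ideal is again a subsemigroup, so the Ellis argument applies. In the ``only if'' direction your recursive construction via $W_{n+1}^{\star}$ is exactly the right mechanism; the invariant ``$x^{-1}C_{n}\in p$ for all $x\in C_{n}$'' is precisely what the star lemma delivers, and it is what guarantees that each $W_{n+1}$ is a finite intersection of members of $p$. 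The diagonal bookkeeping over pairs $(k,j)$ with $k,j\le n$ and $s_{k}\in C_{j}$ is the standard way to discharge clause~(1), and your observation that countability of $S$ enters only here is accurate.
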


{\Large{}The following lemma is essential for our result:}{\Large\par}
\begin{lem}
{\Large{}\label{lemma 12} The notion of quasi-central is preserved
under surjective semigroup homorphism}{\Large\par}
\end{lem}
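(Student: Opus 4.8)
The plan is to invoke the combinatorial characterisation of quasi-central sets from Theorem \ref{Th 11} and transport the witnessing sequence through the homomorphism. So let $\varphi:(S,\cdot)\to(T,\cdot)$ be a surjective semigroup homomorphism and let $A\subseteq S$ be quasi-central, witnessed by a decreasing sequence $\langle C_{n}\rangle_{n=1}^{\infty}$ of subsets of $A$ satisfying conditions $(1)$ and $(2)$ of Theorem \ref{Th 11}. The natural candidate witnessing sequence for $\varphi(A)$ is $\langle \varphi(C_{n})\rangle_{n=1}^{\infty}$, which is clearly decreasing and consists of subsets of $\varphi(A)$. It remains to verify that this sequence satisfies $(1)$ and $(2)$.

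First I would check condition $(2)$: each $\varphi(C_{n})$ must be piecewise syndetic in $T$. Since $C_{n}$ is piecewise syndetic in $S$ and $\varphi$ is surjective, $\varphi(S)=T$ is trivially piecewise syndetic in $T$, so Lemma \ref{Lemma 5} immediately gives that $\varphi(C_{n})$ is piecewise syndetic in $T$. Next, for condition $(1)$, fix $n\in\mathbb{N}$ and $y\in\varphi(C_{n})$; choose $x\in C_{n}$ with $\varphi(x)=y$. By condition $(1)$ for $\langle C_{n}\rangle$, there is $m\in\mathbb{N}$ with $C_{m}\subseteq x^{-1}C_{n}$, i.e. $xC_{m}\subseteq C_{n}$. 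Applying $\varphi$ and using that it is a homomorphism, $\varphi(x)\varphi(C_{m})\subseteq\varphi(C_{n})$, that is $y\varphi(C_{m})\subseteq\varphi(C_{n})$, which says precisely $\varphi(C_{m})\subseteq y^{-1}\varphi(C_{n})$. This is exactly condition $(1)$ for the sequence $\langle\varphi(C_{n})\rangle$.

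The main subtlety — and the only place one has to be a little careful — is the direction of the inclusion $C_{m}\subseteq x^{-1}C_{n}$ versus the set $x^{-1}\varphi(C_{n})$ in the target: one must pass to the equivalent formulation $xC_{m}\subseteq C_{n}$ before applying $\varphi$, since $\varphi$ respects products but not preimages under left translation. The computation $\varphi(xC_{m})=\varphi(x)\varphi(C_{m})$ is just the homomorphism property applied elementwise, so there is no real obstacle here. Conversely, one should note that $\varphi$ being surjective is genuinely used twice: once to lift $y$ to some $x\in C_{n}$, and once (via Lemma \ref{Lemma 5}) to get piecewise syndeticity of the images. Assembling these observations, $\langle\varphi(C_{n})\rangle_{n=1}^{\infty}$ witnesses that $\varphi(A)$ is quasi-central in $T$, completing the proof.
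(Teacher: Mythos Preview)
Your proof is correct and follows essentially the same route as the paper: push the witnessing sequence $\langle C_n\rangle$ through $\varphi$, use Lemma~\ref{Lemma 5} (via surjectivity) for piecewise syndeticity of the images, and verify property~(1) by lifting $y\in\varphi(C_n)$ to $x\in C_n$ and applying the homomorphism to $xC_m\subseteq C_n$. One small quibble with your commentary (not the proof): lifting $y\in\varphi(C_n)$ to $x\in C_n$ uses only the definition of image, not surjectivity of $\varphi$; surjectivity is genuinely needed only for the piecewise syndeticity step.
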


\begin{proof}
{\Large{}Let $\varphi:S_{1}\longrightarrow S_{2}$ be a surjective
semigroup homomorphism. Let $A$ be quasi-central in $S_{1}$ and
then the following holds as in property 1 in theorem \ref{Th 11}.}{\Large\par}

{\Large{}
\[
A\supseteq A_{1}\supseteq A_{2}\supseteq\ldots\supseteq A_{n}\supseteq\ldots
\]
Now in $S_{2}$ consider the following sequence,}{\Large\par}

{\Large{}
\[
\varphi(A)\supseteq\varphi(A_{1})\supseteq\varphi(A_{2})\supseteq\ldots\supseteq\varphi(A_{n})\supseteq\ldots
\]
and due to surjectivity of $\varphi$, $\varphi(A)$ and $\varphi(A_{i})\text{ for }i\in\mathbb{N}$
are piecewise syndetic. }{\Large\par}

{\Large{}Choose $y\in\varphi(A_{m})$ for some $m\in\mathbb{N}$ and
then there exists some $x\in A_{m}$ such that $\varphi(x)=y$ and
consider the set $-y+\varphi(A_{m})$ . Now as $-x+A_{m}\supseteq A_{n}$
for some $n$, we have for any $z\in A_{n}$, $x+z\in A_{m}$ and
then $y+\varphi(z)\in\varphi(A_{m})$ and so $\varphi(z)\in-y+\varphi(A_{m})$. }{\Large\par}

{\Large{}Hence $-y+\varphi(A_{m})\supseteq\varphi(A_{n})$ and as
all $y,m,n$ are chosen arbitrarily, we have the required proof.}{\Large\par}
\end{proof}
{\Large{}Now we will deduce proposition \ref{Prop 9} for quasi-central
sets:}{\Large\par}
\begin{thm}
{\Large{}Let $(S,+)$ be a countable commutative semigroup of class
$\mathscr{A}$. Then for any quasi-central $M\subseteq S$ the collection
$\{(a,b):\,\{a,a+b,a+2b,\ldots,a+lb\}\subset M\}$ is quasi-central
in $(S\times S,+)$.}{\Large\par}
\end{thm}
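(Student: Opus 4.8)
The plan is to exhibit, for the set
\[
\widehat{M}:=\{(a,b)\in S\times S:\{a,a+b,a+2b,\dots,a+lb\}\subseteq M\},
\]
a witnessing sequence of the kind demanded by the combinatorial description of quasi-centrality in Theorem \ref{Th 11}. Since $M$ is quasi-central in $S$, fix a decreasing sequence $\langle C_n\rangle_{n\ge 1}$ of subsets of $M$ enjoying properties $(1)$ and $(2)$ of Theorem \ref{Th 11}, and for a subset $B\subseteq S$ abbreviate $\widehat{B}:=\{(a,b)\in S\times S:\{a,a+b,\dots,a+lb\}\subseteq B\}$. I propose the sequence $\langle\widehat{C_n}\rangle_{n\ge 1}$. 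It is decreasing because $\langle C_n\rangle$ is, and each $\widehat{C_n}$ is contained in $\widehat{M}$ because $C_n\subseteq M$; so it is enough to verify properties $(1)$ and $(2)$ for this sequence.

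Property $(2)$ is immediate from Proposition \ref{Prop 9}: each $C_n$ is piecewise syndetic in $S$ and $S\in\mathscr{A}$, so $\widehat{C_n}$ is piecewise syndetic in $S\times S$. This is the only place the class-$\mathscr{A}$ hypothesis enters, the substantive fact that a progression set is large being already packaged inside Proposition \ref{Prop 9}.

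For property $(1)$, fix $n$ and a point $(a,b)\in\widehat{C_n}$, so that $x_j:=a+jb\in C_n$ for every $j\in\{0,1,\dots,l\}$. Applying property $(1)$ of $\langle C_n\rangle$ to each $x_j$ yields indices $m_j$ with $C_{m_j}\subseteq -x_j+C_n$; put $m:=\max\{m_0,\dots,m_l\}$, so by monotonicity $x_j+C_m\subseteq C_n$ for all $j$ simultaneously. Then for any $(c,e)\in\widehat{C_m}$ we have $c+je\in C_m$ for each $j$, hence $(a+jb)+(c+je)\in C_n$; since $(a+jb)+(c+je)=(a+c)+j(b+e)$, this is precisely the statement $(a,b)+(c,e)\in\widehat{C_n}$. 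Thus $\widehat{C_m}\subseteq -(a,b)+\widehat{C_n}$, which is property $(1)$, and Theorem \ref{Th 11} then gives that $\widehat{M}$ is quasi-central in $(S\times S,+)$.

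The one step requiring care is the passage just described: property $(1)$ of $\langle C_n\rangle$ is a statement about a single element of $C_n$, whereas membership of $(a,b)$ in $\widehat{C_n}$ is governed by the $l+1$ points $a,a+b,\dots,a+lb$ at once, so the extraction of one index $m$ serving all of them — using that $\langle C_n\rangle$ is decreasing — is the heart of the argument. Everything else is routine; in particular no analogue of Lemma \ref{Lemma 6} or Lemma \ref{Lemma 8} is needed beyond their use already inside Proposition \ref{Prop 9}.
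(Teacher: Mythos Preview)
Your proof is correct and follows essentially the same approach as the paper: you build the witnessing chain $\langle \widehat{C_n}\rangle$ from the chain $\langle C_n\rangle$ for $M$, invoke Proposition~\ref{Prop 9} for property~(2), and for property~(1) take a common index dominating the $l+1$ indices coming from $a,a+b,\dots,a+lb$. The paper's proof is the same argument with slightly different notation (it writes $A_N$ for your $C_m$ without spelling out that $N$ is the maximum of the individual indices).
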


\begin{proof}
{\Large{}As, $M$ is quasi-central, theorem \ref{Th 11} guarantees
that there exists a decreasing sequence $\{A_{n}\}_{n\in\mathbb{N}}$
of piecewise syndetic subsets of $S$, such that property 1 of theorem
\ref{Th 11} is satiesfied.}{\Large\par}

{\Large{}As $A_{n}$ is piecewise syndetic $\forall n\in\mathbb{N}$
in the following sequence,}{\Large\par}

{\Large{}\label{eqn 1}
\[
1.\quad M\supseteq A_{1}\supseteq A_{2}\supseteq\ldots\supseteq A_{n}\supseteq\ldots
\]
}{\Large\par}

{\Large{}The set $B=\{(a,b):\,\{a,a+b,a+2b,\ldots,a+lb\}\subset M\}$
is piecewise syndetic in $S\times S$ from proposition \ref{Prop 9}.}{\Large\par}

{\Large{}And for $i\in\mathbb{N},$ $B_{i}=\{(a,b)\in S\times S:\,\{a,a+b,a+2b,\ldots,a+lb\}\subset A_{i}\}\neq\phi$
is piecewise syndetic $\forall i\in\mathbb{N},$ proposition \ref{Prop 9}.}{\Large\par}

{\Large{}Consider,\label{eqn 2}}{\Large\par}

{\Large{}
\[
2.\quad B\supseteq B_{1}\supseteq B_{2}\supseteq\ldots\supseteq B_{n}\supseteq\ldots
\]
}{\Large\par}

{\Large{}Now choose $n\in\mathbb{N}$ and $(a,b)\in B_{n}$, then
$\{a,a+b,a+2b,\ldots,a+lb\}\subset A_{n}$. Then by property 1 we
have
\begin{equation}
A_{N}\subseteq\stackrel[i=0]{l}{\bigcap}(-(a+ib)+A_{n}).
\end{equation}
}{\Large\par}

{\Large{}As for any $(a_{1},b_{1})\in B_{N}$ we have 
\[
\{a_{1},a_{1}+b_{1},a_{1}+2b_{1},\ldots,a_{1}+lb_{1}\}\subseteq A_{N}\subseteq\stackrel[i=0]{l}{\bigcap}(-(a+ib)+A_{n})
\]
 and $(a_{1}+a)+i(b_{1}+b)\in A_{n}\forall i\in\{0,1,2,\ldots,l\}$.
Therefore $(a_{1},b_{1})\in-(a,b)+B_{n}$. Which implies $B_{N}\subseteq-(a,b)+B_{n}$,
showing the property 1 of theorem \ref{Th 11}.}{\Large\par}

{\Large{}This proves the theorem.}{\Large\par}
\end{proof}
{\Large{}The following is an extension of corollary \ref{Corollary 10}.}{\Large\par}
\begin{cor}
{\Large{}\label{Corollary 14} Let $(S,+)$ be a commutative semigroup
of class $\mathscr{A}.$ Then for any quasi-central set $M\subseteq S$,
$M^{l+1}\cap AP^{l+1}$ is quasi-central in $AP^{l+1}$.}{\Large\par}
\end{cor}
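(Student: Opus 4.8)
The plan is to repeat the argument of Corollary~\ref{Corollary 10}, substituting the quasi-central machinery for the piecewise syndetic one. Consider the surjective semigroup homomorphism $\varphi\colon S\times S\rightarrow AP^{l+1}$ given by $\varphi(a,b)=(a,a+b,a+2b,\ldots,a+lb)$; surjectivity is immediate from the definition of $AP^{l+1}$, the homomorphism property is the routine identity $\varphi\bigl((a,b)+(a',b')\bigr)=\varphi(a,b)+\varphi(a',b')$, and note that $S\in\mathscr{A}$ forces $S$, hence $S\times S$ and $AP^{l+1}$, to be countable, so that Theorem~\ref{Th 11} and Lemma~\ref{lemma 12} are available.

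First I would apply the preceding theorem (the quasi-central version of Proposition~\ref{Prop 9}): since $M$ is quasi-central in $S$ and $S\in\mathscr{A}$, the set $B=\{(a,b)\in S\times S:\{a,a+b,a+2b,\ldots,a+lb\}\subseteq M\}$ is quasi-central in $S\times S$. By Lemma~\ref{lemma 12}, $\varphi(B)$ is then quasi-central in $AP^{l+1}$. And $\varphi(B)\subseteq M^{l+1}\cap AP^{l+1}$, because for $(a,b)\in B$ every coordinate of $\varphi(a,b)$ lies in $M$, so $\varphi(a,b)\in M^{l+1}$, while $\varphi(a,b)\in AP^{l+1}$ by construction.

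The last step is to upgrade ``$\varphi(B)$ is quasi-central'' to ``$M^{l+1}\cap AP^{l+1}$ is quasi-central'', i.e.\ to note that within a fixed semigroup quasi-centrality is inherited by supersets. I would argue via Theorem~\ref{Th 11}: if $\langle C_n\rangle_{n=1}^{\infty}$ is a decreasing sequence of subsets of $\varphi(B)$ witnessing quasi-centrality of $\varphi(B)$, then since $\varphi(B)\subseteq M^{l+1}\cap AP^{l+1}$ this same sequence is a decreasing sequence of subsets of $M^{l+1}\cap AP^{l+1}$ still satisfying conditions $(1)$ and $(2)$, so $M^{l+1}\cap AP^{l+1}$ is quasi-central. (Equivalently, in $\beta(AP^{l+1})$ a quasi-central set is a member of an idempotent in $\overline{K(\beta(AP^{l+1}))}$, and ultrafilter membership is upward closed.)

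I do not anticipate a real obstacle; the only two points to handle with care are that $\varphi$ is a homomorphism between exactly the semigroups for which Lemma~\ref{lemma 12} is stated, so the lemma applies unchanged, and the superset observation above, which is what makes the inclusion $\varphi(B)\subseteq M^{l+1}\cap AP^{l+1}$ enough to conclude.
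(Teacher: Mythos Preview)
Your proposal is correct and follows essentially the same route as the paper: define the surjective homomorphism $\varphi\colon S\times S\to AP^{l+1}$, invoke the previous theorem to get that $B$ is quasi-central, push this forward via Lemma~\ref{lemma 12}, and use the inclusion $\varphi(B)\subseteq M^{l+1}\cap AP^{l+1}$. If anything, you are more explicit than the paper about the final superset step, which the paper leaves implicit.
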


\begin{proof}
{\Large{}Let us take a surjective homomorphism $\varphi:S\times S\rightarrow AP^{l+1}$
by, $\varphi(a,b)=(a,a+b,a+2b,\ldots,a+lb)$.}{\Large\par}

{\Large{}As $M$ is quasi central, from property 1 of theorem \ref{Th 11},
it satiesfies equation \ref{eqn 1}.}{\Large\par}

{\Large{}Now from equation \ref{eqn 2},}{\Large\par}

{\Large{}Where the set $B$ and $B_{i}\ (f\text{ or }i\in\mathbb{N})$
are from previous theorem and it was shown that $B$ is quasi central.}{\Large\par}

{\Large{}Now clearly, $\varphi(B_{i})\subseteq M^{l}\cap AP^{l+1}$
for each $i\in\mathbb{N}$ and from lemma \ref{lemma 12} we get our
required result.}{\Large\par}

{\Large{}This proves the claim.}{\Large\par}
\end{proof}
{\Large{}However there are other different type of notion of largeness
such as $IP-sets,Central\,sets,J,sets,C\,sets,D\,sets$ all of those
have combinatorial characterizations described in \cite{key-7} but
we don't know if it is possible to give an affirmative answer of the
problem \ref{prob 4}. }{\Large\par}

\end{document}